\theoremstyle{definition}
 \newtheorem{definition}{Definition}[section]
\theoremstyle{plain}
 \newtheorem{proposition}[definition]{Proposition}
\theoremstyle{plain}
 \newtheorem{theorem}[definition]{Theorem}
\theoremstyle{definition}
\theoremstyle{plain}
\theoremstyle{plain}
\theoremstyle{remark}
\theoremstyle{definition}
\theoremstyle{plain}
\newcommand{\Ext}{\mathrm{Ext}}
\newcommand{\End}{\mathrm{End}}
\newcommand{\Hom}{\mathrm{Hom}}
\newcommand{\Ca}{\mathcal{C}}
\newcommand{\Fun}{\mathrm{F}}
\newcommand{\Def}{\mathrm{Def}}
\newcommand{\Sets}{\mathrm{Sets}}
\newcommand{\Ob}{\mathrm{Ob}}
\newcommand{\SEnd}{\underline{\End}}
\newcommand{\A}{\Lambda}
\newcommand{\m}{\mathfrak{m}}
\renewcommand{\k}{\Bbbk}
\newcommand{\invlim}{\varprojlim}
\title[A short note on deformations of (strongly) Gorenstein-projective modules over the dual numbers]{A short note on deformations of (strongly) Gorenstein-projective modules over the dual numbers}
\author[V\'elez-Marulanda]{Jos\'e A. V\'elez-Marulanda}
\address{Department of Applied Mathematics \& Physics, Valdosta State University, Valdosta, GA,  United States of America}
\email{javelezmarulanda@valdosta.edu}
\address{Facultad de Matem\'aticas e Ingenier\'{\i}as, Fundaci\'on Universitaria Konrad Lorenz, Bogot\'a D.C.,  Colombia}
\email{josea.velezm@konradlorenz.edu.co}
\author{H\'ector Su\'arez}
\address{Escuela de Matem\'aticas y Estad\'istica, Universidad Pedag\'ogica y Tecnol\'ogica de Colombia - Sede Tunja}
\curraddr{University Campus}
\email{hector.suarez@uptc.edu.co}
\keywords{Universal deformation rings \and stable endomorphism rings \and finitely generated Gorenstein-projective modules \and strongly Gorenstein-projective modules \and ring of dual numbers}
\begin{document}
\renewcommand{\labelenumi}{\textup{(\roman{enumi})}}
\renewcommand{\labelenumii}{\textup{(\roman{enumi}.\alph{enumii})}}
\numberwithin{equation}{section}

\begin{abstract}
Let $\k$ be a field of arbitrary characteristic, and let $\A$ be a finite dimensional $\k$-algebra. In this short note we prove that if $V$ is a finitely generated strongly Gorenstein-projective left $\A$-module whose stable endomorphism ring $\SEnd_\A(V)$ is isomorphic to $\k$, then $V$ has an universal deformation ring $R(\A,V)$ isomorphic to the ring of dual numbers $\k[\epsilon]$ with $\epsilon^2=0$. As a consequence, we obtain the following result. Assume that $Q$ is a finite connected acyclic quiver, let $\k Q$ be the corresponding path algebra and let $\A = \k Q[\epsilon] = \k Q\otimes_\k \k[\epsilon]$. If $V$ is a finitely generated Gorenstein-projective left $\A$-module with $\SEnd_\A(V) = \k$, then $V$ has an universal deformation ring $R(\A,V)$ isomorphic to $\k[\epsilon]$. 

\end{abstract}
\subjclass[2010]{16G10 \and 16G20 \and 16G50}
\maketitle

\section{Introduction}\label{int}

Throughout this note, we assume that $\k$ is a fixed field of arbitrary characteristic. We denote by $\widehat{\Ca}$ the category of all complete local commutative Noetherian $\k$-algebras with residue field $\k$. In particular, the morphisms in $\widehat{\Ca}$ are continuous $\k$-algebra homomorphisms that induce the identity map on $\k$.  Let $\A$ be a fixed finite dimensional $\k$-algebra, and let $R$ be a fixed but arbitrary object in $\widehat{\Ca}$.  We denote by $R\A$ the tensor product of $\k$-algebras $R\otimes_\k\A$. Note that if $R$ is an Artinian object in $\widehat{\Ca}$, then $R\A$ is also Artinian on both sides. In this note, we assume all our modules to be finitely generated. Recall that $\A$ is said to be a {\it Gorenstein} $\k$-algebra provided that $\A$ has finite injective dimension as a left and right $\A$-module (see \cite{auslander2}). In particular, algebras of finite global dimension as well as self-injective algebras are Gorenstein. 

Let $V$ be a fixed left $\A$-module. In \cite[Prop. 2.1]{blehervelez}, F. M. Bleher and the author proved that $V$ has a well-defined versal deformation ring $R(\A,V)$ in $\widehat{\Ca}$, which is universal provided that the endomorphism ring of $V$, namely $\End_\A(V)$, is isomorphic to $\k$. Moreover, they also proved that versal deformation rings are preserved under Morita equivalences (see \cite[Prop. 2.5]{blehervelez}).  Following \cite{enochs0,enochs}, we say that $V$ is {\it Gorenstein-projective} provided that there exists an acyclic complex of projective left $\A$-modules 
\begin{equation*}
P^\bullet: \cdots\to P^{-2}\xrightarrow{f^{-2}} P^{-1}\xrightarrow{f^{-1}} P^0\xrightarrow{f^0}P^1\xrightarrow{f^1}P^2\to\cdots
\end{equation*}  
such that $\Hom_\A(P^\bullet, \A)$ is also acyclic and $V=\mathrm{coker}\,f^0$. In particular, every projective left $\A$-module is Gorenstein-projective. Finitely generated Gorenstein-projective $\A$-modules are also known in the literature as modules of {\it Gorenstein dimension zero} or {\it totally reflexive} (see \cite{auslander4},\cite[\S 2]{avramov2}).  We denote by $\A$-Gproj the category of Gorenstein-projective left $\A$-modules, and by $\A\textup{-\underline{Gproj}}$ its stable category, i.e. $\A$-Gproj modulo the morphisms that factor through a projective left $\A$-module.  It is well-known that $\A$-Gproj is a Frobenius category in the sense of \cite[Chap. I, \S 2.1]{happel} and consequently,  $\A\textup{-\underline{Gproj}}$ is a triangulated category (in the sense of \cite{verdier}). Moreover, if $V$ is non-projective Gorenstein-projective, then for all $i\geq 0$, the $i$-th syzygy $\Omega_\A^iV$ is also non-projective Gorenstein-projective, and $\Omega$ induces an autoequivalence $\Omega: \A\textup{-\underline{Gproj}}\to \A\textup{-\underline{Gproj}}$ (see  \cite[Chap. I, \S2.2]{happel}), where $\Omega V$ is the kernel of a projective left $\A$-module cover $P(V)\to V$, which is unique up to isomorphism. Following \cite[Def. 2.1 \& Prop. 2.9]{bennis}, $V$ is said to be {\it strongly Gorenstein projective} if there exists a short exact sequence 
\begin{equation*}
0\to V\to P\to V\to 0, 
\end{equation*}
where $P$ is a projective left $\A$-module and $\Ext^1(V,P')=0$ for all projective left $\A$-modules $P'$.   Note in particular that every strongly Gorenstein-projective left $\A$-module is also Gorenstein-projective. 
These strongly Gorenstein-projective modules are being well-studied by many authors under different contexts (see e.g. \cite{ju} and its references). 

In \cite[Thm. 1.2]{bekkert-giraldo-velez}, it was proved that if  $V$ is a Gorenstein-projective left $\A$-module that has stable endomorphism ring $\SEnd_\A(V)$ isomorphic to $\k$, then the versal deformation ring $R(\A,V)$ is universal. Moreover, in  \cite[Thm. 1.2]{velez4}  in was proved that under this situation, we also have $\Omega V$ has also a universal deformation ring $R(\A, \Omega V)$ that is also isomorphic to $R(\A, V)$, and that  universal deformation rings of  Gorenstein-projective modules whose stable endomorphism ring is isomorphic to $\k$ are preserved under {\it singular equivalences of Morita type with level} between Gorenstein $\k$-algebras. These singular equivalences of Morita type with level were introduced by Z. Wang  in \cite[Def. 2.1]{wang} in order to generalize the concept of {\it singular equivalences of Morita type} introduced by Chen and L. G. Sun in \cite{chensun} (and which were further studied by G. Zhou and A. Zimmermann in \cite{zhouzimm}) as a way of generalizing the concept of stable equivalences of Morita type (as introduced by M. Brou\'e in \cite{broue}).  We refer the reader to \cite{qin} to obtain more information and examples regarding these singular equivalences of Morita type with level.

Let $\A= \k Q[\epsilon] = \k Q\otimes_\k \k[\epsilon]$ where $\k Q$ is the path algebra of a finite, connected and acyclic quiver $Q$ (see e.g. \cite[Chap. III, \S 1]{auslander}), and $\k[\epsilon]$ is the ring of dual numbers with $\epsilon^2=0$. In \cite{ringel-zhang3}, C. M. Ringel and P. Zhang studied Gorenstein-projective modules over $\A$. In particular, it follows from the discussion in \cite[\S 1]{ringel-zhang3}, that $\A$-Gproj coincides with the category of perfect differential $\k Q$-modules, which they denote by $\mathrm{diff}_\textup{perf}(\k Q)$. As a consequence, $\mathrm{diff}_\textup{perf}(\k Q)$ is a Frobenius category whose stable category $\underline{\mathrm{diff}}_\textup{perf}(\k Q)$ is a triangulated category that is equivalent to the orbit category $\mathcal{D}^b(\k Q\textup{-mod})/[1]$, where  $\mathcal{D}^b(\k Q\textup{-mod})$ is the bounded derived category of $\k Q$ and $[1]$ is the suspension functor on $\mathcal{D}^b(\k Q\textup{-mod})$. Since $\A$ is a Gorenstein $\k$-algebra by \cite[\S 2.2]{ringel-zhang3} (see also Proposition \ref{prop1}), it follows that the {\it singularity category} of $\k Q$ (see e.g. \cite[Thm. 6.2.5]{krause4}) is equivalent to $\mathcal{D}^b(\k Q\textup{-mod})/[1]$ as triangulated categories. It as also proved in \cite[\S 4.12]{ringel-zhang3} (see also  Proposition \ref{prop2}) that if $V$ is a Gorenstein-projective left $\A$-module, then $V$ is also strongly Gorenstein-projective.

In this note we are interested in proving the following result.

\begin{theorem}\label{thm1}
Let $\A = \k Q[\epsilon]= \k Q \otimes_\k \k[\epsilon]$ where $\k Q$ is the path algebra of a finite, connected and acyclic quiver $Q$, and $\k[\epsilon]$ is the ring of dual numbers with $\epsilon^2=0$.  If $V$ is a Gorenstein-projective left $\A$-module with $\SEnd_\A(V)= \k$, then the universal deformation ring $R(\A,V)$ is isomorphic to $\k[\![t]\!]/(t^2)=\k[\epsilon]$.
\end{theorem}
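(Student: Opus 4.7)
The plan is to deduce the statement from the note's main theorem for strongly Gorenstein-projective modules (announced in the abstract), via a short reduction. Specifically, I will invoke Proposition \ref{prop2} (extracted from \cite[\S 4.12]{ringel-zhang3}), which states that over $\A = \k Q[\epsilon]$ every Gorenstein-projective module is automatically \emph{strongly} Gorenstein-projective. This places $V$ in the hypotheses of the main strongly Gorenstein-projective result, which then immediately yields $R(\A, V) \cong \k[\epsilon]$.

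The genuine content lies in the main strongly Gorenstein-projective theorem, whose proof I plan in two stages. First, I will compute the tangent space $\Ext^1_\A(V, V)$ by applying $\Hom_\A(V, -)$ to the defining sequence $0 \to V \to P \to V \to 0$ and using the vanishing $\Ext^i_\A(V, P') = 0$ for $i \geq 1$ and $P'$ projective (which follows from the strong Gorenstein-projectivity together with the periodicity $\Omega_\A V \cong V$ that the same sequence yields). The resulting long exact sequence collapses to
\begin{equation*}
\Ext^1_\A(V, V) \;\cong\; \End_\A(V)/\mathrm{im}\bigl(\Hom_\A(V, P) \to \End_\A(V)\bigr) \;\cong\; \SEnd_\A(V) \;=\; \k,
\end{equation*}
so the tangent space is one-dimensional and $R(\A, V)$ is a quotient of $\k[\![t]\!]$. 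Second, I will exhibit a non-trivial lift of $V$ over $\k[\epsilon]$ by equipping $P$ itself with the $\epsilon$-action given by the composite $P \twoheadrightarrow V \hookrightarrow P$, which is square-zero since the inclusion lands in the kernel of the surjection. This makes $P$ free over $\k[\epsilon]$ with $P \otimes_{\k[\epsilon]} \k \cong V$, and non-projectivity of $V$ prevents the defining sequence from splitting, so this deformation is non-trivial. Thus $R(\A, V)$ surjects onto $\k[\epsilon]$ and must be of the form $\k[\![t]\!]/(t^n)$ for some $n \geq 2$.

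The main obstacle is to show $n = 2$. For this I plan to continue the long exact sequence above to obtain $\Ext^2_\A(V, V) \cong \Ext^1_\A(V, V) = \k$, where the connecting morphism sends the class $\xi \in \Ext^1_\A(V, V)$ of the defining sequence to the Yoneda product $\xi \cup \xi$, represented by the 2-fold splice $0 \to V \to P \to P \to V \to 0$ whose middle arrow is again $P \twoheadrightarrow V \hookrightarrow P$. Interpreting $\xi \cup \xi$ as the primary obstruction to extending the $\k[\epsilon]$-lift constructed above to a lift over $\k[\![t]\!]/(t^3)$, its non-vanishing in $\Ext^2_\A(V, V)$ rules out any such extension and forces $n = 2$, completing the argument.
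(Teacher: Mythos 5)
Your proposal is correct, and its outer shell is the same as the paper's: Theorem \ref{thm1} is deduced from the strongly Gorenstein-projective case (Theorem \ref{thm2}) via Propositions \ref{prop1} and \ref{prop2}, and your nontrivial lift of $V$ over $\k[\epsilon]$ — the module $P$ with $\epsilon$ acting by $\iota\circ\pi$ — is exactly the lift the paper uses. Where you genuinely diverge is in showing this lift cannot be pushed to second order. The paper argues by contradiction at the level of modules: assuming the surjection $\psi\colon R(\A,V)\to\k[\![t]\!]/(t^2)$ factors through a surjection onto $\k[\![t]\!]/(t^3)$, it takes the resulting lift $M_0$, shows $M_0\cong V\oplus P$ as $\A$-modules, and analyzes the $t$-action to split $V$ off $P$, making $V$ projective and contradicting $\SEnd_\A(V)=\k$. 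You instead compute $\Ext^1_\A(V,V)\cong\SEnd_\A(V)=\k$ and $\Ext^2_\A(V,V)\cong\k$ from the long exact sequence of $0\to V\to P\to V\to 0$ (using $\Ext^{i}_\A(V,P')=0$ for $i\geq 1$, correctly obtained by dimension shifting along the same sequence), and kill any extension to $\k[\![t]\!]/(t^3)$ because the obstruction is the Yoneda square of the tangent class, which equals $\delta(\xi)$ up to sign and is nonzero since $\delta$ is injective and $\xi\neq 0$. Both arguments work; in fact your explicit tangent-space computation buys something the paper leaves implicit, namely that $R(\A,V)$ is a quotient of $\k[\![t]\!]$, which is what legitimizes lifting $\psi$ to a compatible surjection onto $\k[\![t]\!]/(t^3)$ in the first place (for a general object of $\widehat{\Ca}$ a non-isomorphic surjection onto $\k[\epsilon]$ need only lift to a small extension such as $\k[t_1,t_2]/(t_1,t_2)^2$). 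What the paper's route buys is elementarity: it never needs the identification of the second-order obstruction with the cup/Yoneda square, which in your plan is the one imported fact you should either cite or verify by the short Hochschild-cocycle computation. Two small tidy-ups for your write-up: among the a priori possibilities for $R(\A,V)$ you should also list $\k[\![t]\!]$ itself (your obstruction argument excludes it by the same token), and note that since $\Ext^1_\A(V,V)$ is one-dimensional the tangent class of your lift is automatically a nonzero multiple of $\xi$, so its square is nonzero independently of how the isomorphism $\Fun_V(\k[\epsilon])\cong\Ext^1_\A(V,V)$ is normalized.
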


By the above discussion, Theorem \ref{thm1} provides a classification of universal deformation rings of Gorenstein-projective modules up to singular equivalence of Morita type with level between Gorenstein $\k$-algebras over the dual numbers.  

In order to prove Theorem \ref{thm1}, we prove first the following result (see \S \ref{sec2}). 

\begin{theorem}\label{thm2}
Let $\A$ be a finite dimensional $\k$-algebra. If $V$ is a strongly Gorenstein-projective left $\A$-module such that $\SEnd_\A(V)=\k$, then the universal deformation ring $R(\A,V)$ is isomorphic to $\k[\![t]\!]/(t^2)$.
\end{theorem}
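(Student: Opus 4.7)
The plan is to show two things: that the tangent space of $R(\A,V)$ is one-dimensional over $\k$, and that no deformation of $V$ over $\k[\![t]\!]/(t^3)$ lifts the universal first-order deformation. Together these force $R(\A,V)\cong \k[\![t]\!]/(t^2)$.

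For the tangent space, I exploit the short exact sequence $0 \to V \xrightarrow{i} P \xrightarrow{p} V \to 0$ provided by the strongly Gorenstein-projective hypothesis. Since $P$ is projective, this sequence exhibits $V$ as a first syzygy of itself, yielding $\Om V \cong V$ in the stable category. Combined with the standard isomorphism $\Ext^n_\A(V,W)\cong \SHom_\A(\Om^n V,W)$ for $n\ge 1$ and the hypothesis $\SEnd_\A(V)=\k$, this gives $\Ext^n_\A(V,V)\cong \k$ for all $n\ge 1$. In particular the tangent space $\Ext^1_\A(V,V)$ of $R(\A,V)$ is one-dimensional, so $R(\A,V)\cong \k[\![t]\!]/I$ for some ideal $I \subseteq (t)$. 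A nontrivial first-order deformation is given explicitly by extending the $\A$-module structure on $P$ to a $\k[\epsilon]\A$-module structure in which $\epsilon$ acts as $i\circ p$: the relation $\epsilon^2=0$ follows from $p\circ i=0$, a direct dimension count shows $P$ is free of rank $\dim_\k V$ over $\k[\epsilon]$, and the deformation is nontrivial since the sequence does not split ($V$ being non-projective, as $\SEnd_\A(V)=\k\neq 0$). Hence $R(\A,V)\twoheadrightarrow \k[\epsilon]$, so $I\subseteq (t^2)$.

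It then remains to show $I\not\subseteq (t^3)$. By the standard obstruction theory for module deformations, the obstruction to lifting a first-order deformation with class $[\eta]\in \Ext^1_\A(V,V)$ to $\k[\![t]\!]/(t^3)$ is the Yoneda square $[\eta]\cup[\eta]\in \Ext^2_\A(V,V)$. Under the identification $\Ext^1_\A(V,V)\cong \SEnd_\A(V)=\k$ from the previous step, the class $[\eta]$ is a generator, as it corresponds to the isomorphism $\Om V\cong V$ realized by the sequence itself. Since Yoneda multiplication corresponds to composition of stable morphisms through the syzygy shift, $[\eta]\cup[\eta]$ is also a generator of $\Ext^2_\A(V,V)\cong \SEnd_\A(V)=\k$, hence nonzero. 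The obstruction being nonzero, no lift exists, forcing $I=(t^2)$.

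The main technical obstacle lies in this last step: one must carefully verify that $[\eta]$ maps to a nonzero element of $\SEnd_\A(V)$ under the successive identifications $\Ext^1_\A(V,V)\cong \SHom_\A(\Om V, V)\cong \SEnd_\A(V)$, and that Yoneda multiplication coincides with composition of stable morphisms under the syzygy shift. A more concrete alternative that avoids the abstract obstruction theory is to parametrize candidate second-order lifts $M'$ of $M$ via $1$-cochains $f_1, f_2\colon \A\to \End_\k(V)$, and to observe that associativity of the $\A$-action on $M'$ forces the identity $f_1\cup f_1 = df_2$, which is directly contradicted by $[\eta]\cup[\eta]\neq 0$ in $\Ext^2_\A(V,V)$.
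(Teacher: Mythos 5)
Your proposal is correct, but at the decisive step it takes a genuinely different route from the paper. The paper starts the same way you do, with the lift $(P,\ \epsilon=\iota\circ\pi)$ over $\k[\![t]\!]/(t^2)$ and the resulting surjection $\psi\colon R(\A,V)\to\k[\![t]\!]/(t^2)$, but it never computes $\Ext^1_\A(V,V)$ and uses no obstruction calculus: assuming $\psi$ is not an isomorphism, it factors $\psi$ through a surjection onto $\k[\![t]\!]/(t^3)$, takes the corresponding second-order lift $M_0$, shows $M_0/t^2M_0\cong P$ and $t^2M_0\cong V$, splits the sequence $0\to V\to M_0\to P\to 0$ over $\A$, and then a hands-on analysis of the $t$-action on $V\oplus P$ produces a splitting of a surjection $P\to V$, so $V$ would be projective, contradicting $\SEnd_\A(V)=\k$. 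You instead compute the tangent space, $\Ext^1_\A(V,V)\cong\SHom_\A(\Om V,V)\cong\SEnd_\A(V)=\k$, and rule out second-order lifts by showing the Yoneda square of a generator corresponds to the composite of stable isomorphisms $\Om^2V\to\Om V\to V$, hence is nonzero in $\Ext^2_\A(V,V)\cong\k$. Your route buys more: it gives $\Ext^1$ and $\Ext^2$ explicitly, and the one-dimensionality of the tangent space is exactly what justifies reducing to quotients of $\k[\![t]\!]$ — a point on which the paper's factorization of a non-isomorphism $\psi$ through $\k[\![t]\!]/(t^3)$ implicitly relies. The paper's route, in exchange, is elementary and self-contained: no Hochschild cochains, no compatibility of Yoneda products with stable composition. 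Two caveats on your write-up. First, the isomorphism $\Ext^n_\A(V,W)\cong\SHom_\A(\Om^nV,W)$ is \emph{not} standard for arbitrary finite-dimensional algebras (it already fails for hereditary algebras when $\Om V$ has projective summands; in general one only gets a surjection); it is valid here precisely because a (strongly) Gorenstein-projective $V$ satisfies $\Ext^i_\A(V,P')=0$ for $i\ge 1$ and $P'$ projective, which lets any map $\Om V\to W$ that factors through a projective be extended over the projective cover of $V$ — you should state and use this hypothesis explicitly. Second, the identification of the second-order obstruction with the Yoneda square is genuine content, not a citation-free triviality; your cochain sketch (multiplicativity of the deformed action forces $f_1\cup f_1=\pm\, df_2$) is the right way to discharge it, and since no factor of $1/2$ appears for module deformations the argument is characteristic-free. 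With those two points made precise, your proof is complete.
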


For basic concepts concerning Gorenstein-projective modules and representation theory of algebras, we refer the reader to  \cite{auslander,auslander3,holmH} (and their references). For an historical background and previous work on lifts and deformations of modules (in the sense of this article) under the context of group algebras, and self-injective $\k$-algebras, we invite the reader to look at the articles \cite{blehervelez,blehervelez2,bleher15,bekkert-giraldo-velez,velez2,velez3} and their references.   

\section{Preliminaries}\label{sec2}
Throughout this section we keep the notation introduced in \S \ref{int}. In particular, we assume that $\A$ is a finite dimensional $\k$-algebra.

\subsection{Lifts, deformations, and (uni)versal deformation rings}\label{sec21}

Let $V$ be a left $\A$-module and let $R$ be a fixed but arbitrary object in $\widehat{\Ca}$. A {\it lift} $(M,\phi)$ 
of $V$ over $R$ is a finitely generated left $R\A$-module $M$ 
that is free over $R$ 
together with an isomorphism of $\A$-modules $\phi:\k\otimes_RM\to V$. Two lifts $(M,\phi)$ and $(M',\phi')$ over $R$ are {\it isomorphic} 
if there exists an $R\A$-module 
isomorphism $f:M\to M'$ such that $\phi'\circ (\mathrm{id}_\k\otimes_R f)=\phi$.
If $(M,\phi)$ is a lift of $V$ over $R$, we  denote by $[M,\phi]$ its isomorphism class and say that $[M,\phi]$ is a {\it deformation} of $V$ 
over $R$. We denote by $\Def_\A(V,R)$ the 
set of all deformations of $V$ over $R$. The {\it deformation functor} corresponding to $V$ is the 
covariant functor $\widehat{\Fun}_V:\widehat{\Ca}\to \Sets$ defined as follows: for all objects $R$ in $\widehat{\Ca}$, define $\widehat{\Fun}_V(R)=\Def_
\A(V,R)$, and for all morphisms $\theta:R\to 
R'$ in $\widehat{\Ca}$, 
let $\widehat{\Fun}_V(\theta):\Def_\A(V,R)\to \Def_\A(V,R')$ be defined as $\widehat{\Fun}_V(\theta)([M,\phi])=[R'\otimes_{R,\theta}M,\phi_\theta]$, 
where $\phi_\theta: \k\otimes_{R'}
(R'\otimes_{R,\theta}M)\to V$ is the composition of $\A$-module isomorphisms 
\[\k\otimes_{R'}(R'\otimes_{R,\theta}M)\cong \k\otimes_RM\xrightarrow{\phi} V.\]

Suppose there exists an object $R(\A,V)$ in $\widehat{\Ca}$ and a deformation $[U(\A,V), \phi_{U(\A,V)}]$ of $V$ over $R(\A,V)$ with the 
following property. For all objects $R$ in $\widehat{\Ca}$ and for all deformations $[M,\phi]$ of $V$ over $R$, there exists a morphism $\psi_{R(\A,V),R,[M,\phi]}:R(\A,V)\to R$ 
in $\widehat{\Ca}$ such that 
\[\widehat{\Fun}_V(\psi_{R(\A,V),R,[M,\phi]})[U(\A,V), \phi_{U(\A,V)}]=[M,\phi],\]
and moreover, $\psi_{R(\A,V),R,[M,\phi]}$ is unique if $R$ is the ring of dual numbers $\k[\epsilon]$ with $\epsilon^2=0$.  Then $R(\A,V)$ and $
[U(\A,V),\phi_{U(\A,V)}]$ are called the {\it versal deformation ring} and {\it versal deformation} of $V$, respectively. If the morphism $
\psi_{R(\A,V),R,[M,\phi]}$ is unique for all $R\in\Ob(\widehat{\Ca})$ and deformations $[M,\phi]$ of $V$ over $R$, then $R(\A,V)$ and $[U(\A,V),\phi_{U(\A,V)}]$ are 
called the {\it universal deformation ring} and the {\it universal deformation} of $V$, respectively.  In other words, the universal deformation 
ring $R(\A,V)$ represents the deformation functor $\widehat{\Fun}_V$ in the sense that $\widehat{\Fun}_V$ is naturally isomorphic to the $\Hom$ 
functor $\Hom_{\widehat{\Ca}}(R(\A,V),-)$. 

We denote by $\Fun_V$  the restriction of $\widehat{\Fun}_V$ to the full subcategory of Artinian objects in $\widehat{\Ca}$. Following \cite[\S 2.6]{sch}, we call the set $\Fun_V(\k[\epsilon])$ the tangent space of $\Fun_V$, which has a structure of a $\k$-vector space by \cite[Lemma 2.10]{sch}. 
It was proved in \cite[Prop. 2.1]{blehervelez} that $\Fun_V$ satisfies the Schlessinger's criteria \cite[Thm. 2.11]{sch}, that there exists an isomorphism of $\k$-vector spaces 
\begin{equation}\label{hoch}
\Fun_V(\k[\epsilon])\to \Ext_\A^1(V,V),
\end{equation}
and that $\widehat{\Fun}_V$ is continuous in the sense of \cite[\S 14]{mazur}, i.e. for all objects $R$ in $\widehat{\Ca}$, we have  
\begin{equation}\label{cont}
\widehat{\Fun}_V(R)=\invlim_n \Fun_V(R/\m_R^n), 
\end{equation}
where $\m_R$ denotes the unique maximal ideal of $R$. Consequently, $V$ has always a well-defined versal deformation ring $R(\A,V)$ which is also universal provided that $\End_\A(V)$ is isomorphic to $\k$. It was also proved in \cite[Prop. 2.5]{blehervelez} that versal deformation rings are invariant under Morita equivalences between finite dimensional $\k$-algebras. It follows from the isomorphism of $\k$-vector spaces (\ref{hoch}) that if $\dim_\k \Ext_\A^1(V,V)=r$, then the versal deformation ring $R(\A,V)$ is isomorphic to a quotient algebra of the power series ring  $\k[\![t_1,\ldots,t_r]\!]$ and $r$ is minimal with respect to this property. In particular, if $V$ is a left $\A$-module such that $\Ext_\A^1(V,V)=0$, then $R(\A,V)$ is universal and isomorphic to $\k$ (see \cite[Remark 2.1]{bleher15} for more details). As already noted in \S\ref{int}, it follows by \cite[Thm. 1.2 (ii)]{bekkert-giraldo-velez} that if $V$ is a Gorenstein-projective left $\A$-module with stable endomorphism $\SEnd_\A(V)$ isomorphic to $\k$, then the versal deformation ring is $R(\A,V)$ is universal. Moreover, if $P$ is a projective left $\A$-module, then the versal deformation ring $R(\A,V\oplus P)$ is also universal and isomorphic to $R(\A,V)$. 

\subsection{(Strongly) Gorenstein projective modules over the dual numbers}

Assume next that $\A = \k Q[\epsilon]$ where $\k Q$ is the path algebra of a finite, connected and acyclic quiver $Q$, and $\k[\epsilon]$ is the ring of dual numbers with $\epsilon^2=0$, and let $V$ a left $\A$-module. We say that $V$ is {\it torsionless} provided that $V$ is a submodule of a projective left $\A$-module. The following result follows from \cite[\S 2.2]{ringel-zhang3}.

\begin{proposition}\label{prop1}
The $\k$ algebra $\A=\k Q[\epsilon]$ is Gorenstein and the following conditions are equivalent for all left $\A$-modules $V$:
\begin{enumerate}
\item $V$ is Gorenstein-projective;
\item $V$ is torsionless;
\item $\Ext_\A^1(V,\A)=0$.
\end{enumerate}
\end{proposition}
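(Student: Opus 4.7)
The plan is to first establish that $\A$ is Gorenstein, then prove the cyclic chain of implications (i) $\Rightarrow$ (ii) $\Rightarrow$ (iii) $\Rightarrow$ (i) for an arbitrary left $\A$-module $V$.

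For the Gorenstein claim, I would exploit that $\k Q$ is hereditary (since $Q$ is finite and acyclic, its global dimension is at most $1$) and that $\k[\epsilon]$ is self-injective (in fact Frobenius). The standard bound $\mathrm{inj.dim}_{A\otimes_\k B}(A\otimes_\k B)\leq \mathrm{inj.dim}_A(A)+\mathrm{inj.dim}_B(B)$ for finite-dimensional $\k$-algebras then yields $\mathrm{inj.dim}_\A(\A)\leq 1$ on both sides, so $\A$ is Gorenstein.

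The implication (i) $\Rightarrow$ (ii) is immediate from the definition: if $V=\mathrm{coker}\,f^0$ for an acyclic complex $P^\bullet$ of projectives, then the exact piece $0\to V\to P^1\to \mathrm{Image}\,f^1\to 0$ realizes $V$ as a submodule of the projective $P^1$. For (ii) $\Rightarrow$ (iii), given any short exact sequence $0\to V\to P\to P/V\to 0$ with $P$ projective, applying $\Hom_\A(-,\A)$ yields $\Ext^1_\A(P,\A)\to\Ext^1_\A(V,\A)\to\Ext^2_\A(P/V,\A)$; the outer terms vanish ($P$ projective on the left, $\mathrm{inj.dim}_\A(\A)\leq 1$ on the right), so $\Ext^1_\A(V,\A)=0$.

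The main obstacle is (iii) $\Rightarrow$ (i). The crucial observation is that $\mathrm{inj.dim}_\A(\A)\leq 1$ automatically forces $\Ext^i_\A(V,\A)=0$ for all $i\geq 2$, so the single hypothesis $\Ext^1_\A(V,\A)=0$ upgrades to $\Ext^i_\A(V,\A)=0$ for every $i\geq 1$. To produce the required totally acyclic complex, a projective resolution of $V$ supplies the left half $\cdots\to P^{-1}\to P^0\to V\to 0$, and iterated dimension shifting shows that each syzygy $\Om^n V$ also satisfies $\Ext^1_\A(\Om^n V,\A)=0$ and is thus torsionless by the already-established (iii) $\Rightarrow$ (ii). The right half $0\to V\to Q^0\to Q^1\to\cdots$ is then obtained by choosing an embedding $V\injection Q^0$ into a projective, taking the quotient, embedding again, and iterating; the Ext-vanishing along the way guarantees that applying $\Hom_\A(-,\A)$ to the spliced complex remains acyclic. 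A cleaner alternative specific to $\A=\k Q[\epsilon]$ is to identify $\A$-modules with differential $\k Q$-modules $(M,d)$: torsionless-ness forces $M$ to be projective over the hereditary algebra $\k Q$, and the embedding $m\mapsto (m,d(m))$ into the projective differential module $\bigl(M\oplus M,\left(\begin{smallmatrix}0&1\\0&0\end{smallmatrix}\right)\bigr)$ yields an explicit $2$-periodic totally acyclic complex of projectives, exhibiting $V$ as Gorenstein-projective.
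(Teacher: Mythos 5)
Your proposal is an attempt at an actual proof, whereas the paper itself gives none (it simply cites Ringel--Zhang, \S 2.2, where the characterization of Gorenstein-projectives over a $1$-Gorenstein algebra is established). Your Gorenstein argument and the implications (i) $\Rightarrow$ (ii) and (ii) $\Rightarrow$ (iii) are fine (modulo an indexing slip: from the totally acyclic complex one gets $V\cong \Im\, f^1\injection P^2$, not a monomorphism $V\to P^1$). The genuine gap is in (iii) $\Rightarrow$ (i): you justify that $V$ (and the later cokernels) can be embedded into projectives ``by the already-established (iii) $\Rightarrow$ (ii)'', but that implication has \emph{not} been established --- only (ii) $\Rightarrow$ (iii) has --- so the argument is circular. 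Your ``cleaner alternative'' via differential $\k Q$-modules does not repair this: it starts from torsionlessness, so it proves (ii) $\Rightarrow$ (i), and nothing in your proposal gets you from the Ext-vanishing condition (iii) back to (ii) or (i). Note also that, even granted an embedding $V\injection Q^0$, the iterated right half is totally acyclic only if each cokernel again satisfies $\Ext^1_\A(-,\A)=0$; for an arbitrary embedding this is the cokernel of $\Hom_\A(Q^0,\A)\to\Hom_\A(V,\A)$ and need not vanish, so some care in the choice (as in the differential-module construction, where the cokernel is again torsionless) is required.

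The missing link (iii) $\Rightarrow$ (ii) is a real theorem about $1$-Gorenstein algebras and needs an extra idea, e.g.\ the Auslander--Bridger transpose: for a projective presentation $P_1\to P_0\to V\to 0$, one has $\ker(V\to V^{**})\cong \Ext^1_{\A^{\mathrm{op}}}(\mathrm{Tr}\,V,\A)$ and, symmetrically, $\Ext^1_\A(V,\A)\cong \ker(\mathrm{Tr}\,V\to (\mathrm{Tr}\,V)^{**})$; thus $\Ext^1_\A(V,\A)=0$ says $\mathrm{Tr}\,V$ is a torsionless \emph{right} module, and applying your (ii) $\Rightarrow$ (iii) argument on the right side (legitimate, since $\mathrm{inj.dim}\,\A_\A\le 1$ as well) gives $\Ext^1_{\A^{\mathrm{op}}}(\mathrm{Tr}\,V,\A)=0$, i.e.\ $V\to V^{**}$ is injective and $V$ is torsionless. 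With that inserted, your chain (i) $\Rightarrow$ (ii) $\Rightarrow$ (iii) $\Rightarrow$ (ii) $\Rightarrow$ (i) closes, the last step by your differential-module construction (the sign/twist appearing in the cokernel there is harmless). Alternatively, one may simply quote Ringel--Zhang as the paper does.
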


The following result follows from \cite[\S 4.12]{ringel-zhang3}.

\begin{proposition}\label{prop2}
If $V$ is a torsionless left $\k Q[\epsilon]$-module, then $V$ is strongly Gorenstein-projective. 
\end{proposition}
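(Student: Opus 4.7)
My plan is to view $V$ as a differential $\k Q$-module $(M, d)$, where $d$ denotes multiplication by $\epsilon$ (so $d^2 = 0$), and to exhibit an explicit short exact sequence $0 \to V \to P \to V \to 0$ of $\A$-modules with $P$ projective. First I would note that $M$ is projective as a $\k Q$-module: by the torsionless hypothesis, $V$ embeds into some free $\A$-module $\A^n$, which is free of rank $2n$ over $\k Q$ in view of the decomposition $\A = \k Q \oplus \k Q\epsilon$; since $\k Q$ is hereditary, any $\k Q$-submodule of a free $\k Q$-module is projective.

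Next I would take $P := \A \otimes_{\k Q} M$, a projective left $\A$-module whose underlying $\k Q$-module is $M \oplus M$ with $\epsilon$ acting as $(a,b) \mapsto (0,a)$. Define $\iota : V \to P$ by $\iota(v) := (d v, v)$; routine checks show $\iota$ is $\A$-linear and injective (in fact split over $\k Q$), and that $\mathrm{coker}(\iota)$ is isomorphic as an $\A$-module to $(M, -d)$, that is, the same underlying $\k Q$-module $M$ but with $\epsilon$ acting as $-d$ instead of $d$.

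The main step, and the potential obstacle, is producing an $\A$-module isomorphism $(M, d) \cong (M, -d)$, which will splice with the cokernel map to yield the required short exact sequence $0 \to V \to P \to V \to 0$. For this I would use heredity of $\k Q$ a second time: $\mathrm{im}(d)$ is a $\k Q$-submodule of the projective $M$, hence is itself projective, so the short exact sequence of $\k Q$-modules $0 \to \ker d \to M \xrightarrow{d} \mathrm{im}(d) \to 0$ splits and produces a decomposition $M = \ker d \oplus M'$ with $d|_{M'}$ an isomorphism onto $\mathrm{im}(d) \subseteq \ker d$. The $\k Q$-linear involution $\phi : M \to M$ which is $+\mathrm{id}$ on $\ker d$ and $-\mathrm{id}$ on $M'$ then satisfies $\phi \circ d = -d \circ \phi$, providing the required isomorphism. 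The remaining condition $\Ext^1_\A(V, P') = 0$ for every projective left $\A$-module $P'$ follows at once from Proposition \ref{prop1}(iii), since every projective is a direct summand of some $\A^n$.
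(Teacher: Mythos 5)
Your argument is correct, and it is genuinely different in character from what the paper does: the paper offers no proof at all for this proposition, simply citing \cite[\S 4.12]{ringel-zhang3}, whereas you give a self-contained construction. Each step checks out. Torsionless does imply that the underlying $\k Q$-module $M$ of $V=(M,d)$ is projective, since $\A^n$ is free of rank $2n$ over $\k Q$ and $\k Q$ is hereditary (this in effect recovers Ringel--Zhang's description of $\A$-Gproj as perfect differential modules, quoted in the introduction). The module $P=\A\otimes_{\k Q}M$ is projective over $\A$, the map $\iota(v)=(dv,v)$ is indeed $\A$-linear with cokernel $(M,-d)$, and you correctly identify the one real subtlety: the quotient carries the differential $-d$, not $d$, so an isomorphism $(M,d)\cong(M,-d)$ must be produced (a naive sign change in $\iota$ destroys $\A$-linearity). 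Your second use of heredity --- splitting $0\to\ker d\to M\xrightarrow{d}\mathrm{im}(d)\to 0$ to get $M=\ker d\oplus M'$ and taking the involution that is $+\mathrm{id}$ on $\ker d$ and $-\mathrm{id}$ on $M'$ --- does anticommute with $d$ and settles this, in any characteristic. Finally, $\Ext_\A^1(V,P')=0$ for all projective $P'$ follows from Proposition \ref{prop1}(iii) exactly as you say, since projectives are summands of finite free modules. What your route buys is an explicit, elementary proof keeping the note self-contained and making visible exactly where the hypotheses enter (heredity of $\k Q$ twice, torsionless once more for the Ext-vanishing); what the paper's citation buys is brevity and consistency with Ringel--Zhang's framework of perfect differential modules, where the statement sits inside a broader structural analysis.
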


\section{Proof of main results}\label{sec2}

In the following, we prove Theorem \ref{thm2} by using an argument similar to the one used in the proof of \cite[Thm. 5.2]{bekkert-giraldo-velez}.
\begin{proof}[Proof of Theorem \ref{thm2}]
Assume that $V$ is strongly Gorenstein-projective left $\A$-module. Then there exists a short exact sequence of left $\A$-modules 
\begin{equation*}
0\to V \xrightarrow{\iota} P\xrightarrow{\pi} V \to 0, 
\end{equation*}
where $P$ is a projective left $\A$-module. It follows that $P$ defines a non-trivial lift of $V$ over the ring of dual numbers $\k[\![t]\!]/(t^2)$, where the action of $t$ is given by $\iota \circ \pi$. This implies that there exists a unique surjective $\k$-algebra morphism $\psi: R(\A, V)\to \k[\![t]\!]/(t^2)$ in $\widehat{\Ca}$ corresponding to the deformation defined by $P$. We claim that $\psi$ is an isomorphism, for if we suppose otherwise, then there exists a surjective $\k$-algebra homomorphism $\psi_0:R(\A,V)\to \k[\![t]\!]/(t^3)$ in $\hat{\Ca}$ such that $\pi'\circ \psi_0=\psi$, where $\pi':\k[\![t]\!]/(t^3)\to \k[\![t]\!]/(t^2)$ is the natural projection.  Let $M_0$ be a $\k[\![t]\!]/(t^3)\A$-module which defines a lift of $V$ over $\k[\![t]\!]/(t^3)$ corresponding to $\psi_0$.  Let $(U(\A,V),\phi_{U(\A,V)})$ be a lift of $V$ over $R(\A,V)$ that defines the universal deformation of $V$. Then $M_0\cong \k[\![t]\!]/(t^3)\otimes_{R(\A,V), \psi_0}U(\A,V)$. Note that $M_0/tM_0\cong V$ as $\A$-modules. On the other hand, we also have that 
\begin{align*}
P&\cong \k[\![t]\!]/(t^2)\otimes_{R(\A,V), \psi}U(\A,V)\\
&\cong \k[\![t]\!]/(t^2)\otimes_{\k[\![t]\!]/(t^3), \pi'}(\k[\![t]\!]/(t^3)\otimes_{R(\A,V), \psi_0}U(\A,V))\\
&\cong \k[\![t]\!]/(t^2)\otimes_{\k[\![t]\!]/(t^3),\pi'}M_0.
\end{align*}
Note that since $\ker \pi'=(t^2)/(t^3)$, we have $\k[\![t]\!]/(t^2)\otimes_{\k[\![t]\!]/(t^3),\pi'}M_0\cong M_0/t^2M_0$. Thus $P\cong M_0/t^2M_0$ as $\k[\![t]\!]/(t^2)\A$-modules. Consider the surjective $\k[\![t]\!]/(t^3)\A$-module homomorphism $g:M_0\to t^2M_0$ defined by $g(x)=t^2x$ for all $x\in M_0$. Since $M_0$ is free over $\k[\![t]\!]/(t^3)$, it follows that $\ker g =tM_0$ and thus $M_0/tM_0\cong t^2M_0$, which implies that $V\cong t^2M_0$. Hence we get a short exact sequence of $\k[\![t]\!]/(t^3)\A$-modules 
\begin{equation}\label{ext4}
0\to V\to M_0\to P\to 0.
\end{equation} 
Since $P$ is a projective left $\A$-module, it follows that (\ref{ext4}) splits as a short exact sequence of $\A$-modules. Hence $M_0=V\oplus P$ as $\A$-modules. Writing elements of $M_0$ as $(u,v)$  where $u\in V$ and $v\in P$, the $t$-action on $M_0$ is given as $t(u,v)=(\mu(v), tv)$ for some surjective $\A$-module homomorphism $\mu: P\to V$. Using $t^2v=0$ for all $v\in P$, we obtain that $t^2(u,v)=(\mu(tv),0)$ for all $u\in V$ and $v\in P$. Since $t^2M_0\cong V$, this means that the restriction of $\mu$ to $tP$ has to define an isomorphism $\tilde{\mu}:tP\to V$. Therefore, $\tilde{\mu}^{-1}$ provides a $\A$-module homomorphism splitting of $\mu$, which shows that $V$ is isomorphic to a direct summand of $P$. But then $V$ is itself projective, which contradicts our assumption that $\SEnd_\A(V)=\k$. Thus $\psi$ is a $\k$-algebra isomorphism and $R(\A,V)\cong \k[\![ t]\!]/(t^2)$. 
\end{proof}

It follows now that Theorem \ref{thm1} is a direct consequence of Propositions \ref{prop1}, \ref{prop2} and Theorem \ref{thm2}.

\section{Data availability statement}  
This manuscript has no associated data.

\bibliographystyle{amsplain}
\bibliography{DeformationsDualNumbers}

\providecommand{\bysame}{\leavevmode\hbox to3em{\hrulefill}\thinspace}
\providecommand{\MR}{\relax\ifhmode\unskip\space\fi MR }
\providecommand{\MRhref}[2]{%
  \href{http://www.ams.org/mathscinet-getitem?mr=#1}{#2}
}
\providecommand{\href}[2]{#2}
\begin{thebibliography}{10}

\bibitem{auslander4}
M.~Auslander and M.~Bridger, \emph{Stable {M}odule {T}heory}, Memoirs of the
  American Mathematical Society, no.~94, American Mathematical Society,
  Providence, R. I., 1969.

\bibitem{auslander2}
M.~Auslander and I.~Reiten, \emph{Applications of contravariantly finite
  subcategories}, Adv. in Math. \textbf{86} (1991), 111--152.

\bibitem{auslander3}
\bysame, \emph{Cohen-{M}acaulay and {G}orenstein {A}rtin algebras},
  Representation theory of finite groups and finite-dimensional algebras
  ({B}ielefeld, 1991) ({G}.~{O}. {M}ichler and {C}.~{M}. {R}ingel, eds.),
  Progr. Math., no.~95, Birkh{\"a}user, Basel, 1991, pp.~221--245.

\bibitem{auslander}
M.~Auslander, I.~Reiten, and S.~Smal{\o}, \emph{Representation theory of
  {A}rtin algebras}, Cambridge Studies in Advanced Mathematics, no.~36,
  Cambridge University Press, Cambridge, 1995.

\bibitem{avramov2}
L.~L. Avramov and {A.} Martsinkovsky, \emph{Absolute, relative, and {T}ate
  cohomology of modules of finite {G}orenstein dimension}, Proc. London Math.
  Soc. (3) \textbf{85} (2002), no.~2, 393--440.

\bibitem{bekkert-giraldo-velez}
V.~Bekkert, H.~Giraldo, and J.~A. V\'{e}lez-Marulanda, \emph{Universal
  deformation rings of finitely generated {G}orenstein-projective modules over
  finite dimensional algebras}, J. Pure Appl. Algebra \textbf{224} (2020),
  no.~5, 106223. \MR{4046229}

\bibitem{bennis}
D.~Bennis and N.~Mahdou, \emph{Strongly {G}orenstein projective, injective, and
  flat modules}, J. Pure Appl. Algebra \textbf{210} (2007), no.~2, 437--445.
  \MR{2320007}

\bibitem{blehervelez}
F.~M. Bleher and J.~A. V{\'e}lez-Marulanda, \emph{Universal deformation rings
  of modules over {F}robenius algebras}, J. Algebra \textbf{367} (2012),
  176--202.

\bibitem{blehervelez2}
\bysame, \emph{Deformations of complexes for finite dimensional algebras}, J.
  Algebra \textbf{491} (2017), 90--140.

\bibitem{bleher15}
F.~M. Bleher and D.~J. Wackwitz, \emph{Universal deformation rings and
  self-injective {N}akayama algebras}, J. Pure Appl. Algebra \textbf{223}
  (2019), no.~1, 218--244.

\bibitem{broue}
M.~Brou{\'e}, \emph{Equivalences of blocks of group algebras},
  Finite-dimensional algebras and related topics (Ottawa, ON, 1992) ({V}. Dlab
  and {L}.~{L}. Scott, eds.), {NATO} Adv. Sci. Inst. Ser. C Math. Phys. Sci.,
  no. 424, Kluwer Acad. Publ., Dordrecht, 1994, pp.~1--26.

\bibitem{chensun}
X.~W. Chen and L.~G. Sun, \emph{Singular equivalences of {M}orita type},
  Preprint, 2012.

\bibitem{enochs0}
E.~E. Enochs and O.~M.~G. Jenda, \emph{Gorenstein injective and projective
  modules}, Math. Z. \textbf{220} (1995), no.~4, 611--633.

\bibitem{enochs}
\bysame, \emph{Relative homological algebra}, De Gruyter Expositions in
  Mathematics, no.~30, Walter de Gruyter \& Co., Berlin, 2000.

\bibitem{happel}
D.~Happel, \emph{Triangulated categories in the representation theory of
  finite-dimensional algebras}, London Mathematical Society Lecture Notes
  Series, no. 119, Cambridge University Press, Cambridge, 1988.

\bibitem{holmH}
H.~Holm, \emph{Gorenstein homological dimensions}, J. Pure Appl. Algebra
  \textbf{189} (2004), no.~1--3, 167--193.

\bibitem{ju}
T.~Ju and X.~H. Luo, \emph{Strongly {G}orenstein-projective quiver
  representations}, Taiwanese J. Math. \textbf{25} (2021), no.~3, 449--461.
  \MR{4298902}

\bibitem{krause4}
H.~Krause, \emph{Homological theory of representations}, Cambridge Studies in
  Advanced Mathematics, vol. 195, Cambridge University Press, Cambridge, 2022.
  \MR{4327095}

\bibitem{mazur}
B.~Mazur, \emph{An introduction to the deformation theory of {G}alois
  representations}, Modular{ F}orms and {F}ermat's {L}ast {T}heorem ({G}.
  {C}ornell, {J}.~{H}. {S}ilverman, and {G.} {S}tevens, eds.), Springer-Verlag,
  Boston, MA, 1997, pp.~243--311.

\bibitem{qin}
Y.~Qin, \emph{Reduction techniques of singular equivalences}, J. Algebra
  \textbf{612} (2022), 616--635.

\bibitem{ringel-zhang3}
C.~M. Ringel and P.~Zhang, \emph{Representations of quivers over the algebra of
  dual numbers}, J. Algebra \textbf{475} (2017), 327--360. \MR{3612474}

\bibitem{sch}
M.~Schlessinger, \emph{Functors of {A}rtin rings}, Trans. Amer. Math. Soc.
  \textbf{130} (1968), 208--222.

\bibitem{velez2}
J.~A. V\'{e}lez-Marulanda, \emph{A note on deformations of
  {G}orenstein-projective modules over finite dimensional algebras}, Rev.
  Colombiana Mat. \textbf{53} (2019), no.~suppl., 245--256. \MR{4053374}

\bibitem{velez3}
J.~A. V{\'e}lez-Marulanda, \emph{On deformations of {G}orenstein-projective
  modules over {N}akayama and triangular matrix algebras}, J. Pure Appl.
  Algebra \textbf{225} (2021), 106562.

\bibitem{velez4}
\bysame, \emph{Singular equivalences of {M}orita type with level, {G}orenstein
  algebras, and universal deformation rings}, J. Algebra \textbf{630} (2023),
  225--247.

\bibitem{verdier}
J.~Verdier, \emph{Cat{\'e}gories d{\'e}riv{\'e}es: quelques r{\'e}sultats
  ({\'e}tat 0)}, Cohomologie {\'e}tale: {S}{\'e}minaire de {G}{\'e}om{\'e}trie
  {A}lg{\'e}brique du {B}ois-{M}arie {SGA} 4 1/2 (P.~Deligne, ed.), Lecture
  Notes in Math., no. 569, Springer, Berlin, 1977, pp.~262--311.

\bibitem{wang}
Z.~Wang, \emph{Singular equivalence of {M}orita type with level}, J. Algebra
  \textbf{439} (2015), 245--269.

\bibitem{zhouzimm}
G.~Zhou and A.~Zimmermann, \emph{On singular equivalences of {M}orita type}, J.
  Algebra \textbf{385} (2013), 64--79.

\end{thebibliography}

\end{document}